\newcommand{\ds}{\displaystyle}
\newcommand{\mbf}{\mathbf}
\begin{document}

\title{A fast method for variable-order space-fractional diffusion equations
}

\titlerunning{Fast collocation method for FDEs}        

\author{Jinhong Jia \and Xiangcheng Zheng \and Hong Wang }

\authorrunning{Jia, Zheng and Wang} 

\institute{Jinhong Jia \at
              School of Mathematics and Staticstics, Shandong Normal University, Jinan, Shandong 250358, China \\
              \email{jhjia@sdnu.edu.cn}           
           \and
           Xiangcheng Zheng and Hong Wang \at
             Dempartment of Mathematics, University of South Carolina, Columbia, South Carolina 29208, USA\\
             \email{xz3@math.sc.edu and hwang@math.sc.edu}
}

\date{Received: date / Accepted: date}

\maketitle

\begin{abstract}
We develop a fast divided-and-conquer indirect collocation method for the homogeneous Dirichlet boundary value problem of variable-order space-fractional diffusion equations. Due to the impact of the space-dependent variable order, the resulting stiffness matrix of the numerical approximation does not have a Toeplitz-like structure. In this paper we derive a fast approximation of the coefficient matrix by the means of a sum of Toeplitz matrices multiplied by diagonal matrices. We show that the approximation is asymptotically consistent with the original problem, which requires $O(kN\log^2 N)$ memory and $O(k N\log^3 N)$ computational complexity with $N$ and $k$ being the numbers of unknowns and the approximants, respectively. Numerical experiments are presented to demonstrate the effectiveness and the efficiency of the proposed method.
\keywords{Variable-order space-fractional diffusion equation \and Collocation method \and Divide-and-conquer algorithm \and Toeplitz matrix}
 \subclass{65F05\and 65M70 \and 65R20}
\end{abstract}

\section{Introduction}

Field tests showed that space-fractional diffusion equations (sFDEs) provide more accurate descriptions of challenging phenomena of superdiffusive transport and long range interaction, which occur in solute transport in heterogeneous porous media and other applications, than integer-order diffusion equations do \cite{BenSchMee,DelCar,MetKla04,SchBenMee01}. In fact, integer-order diffusion equations were derived if the underlying independent and identically distributed particle movements have (i)  a mean free path and (ii) a mean waiting time. In this case, the central limit theorem concludes that the (normalized) partial sum of the independent and identically distributed particle movements converges to Brownian motions. The probability density distribution of finding a particle somewhere in space is Gaussian, which satisfies the classical Fickian diffusion equation \cite{MeeSik,MetKla04}.

Note that assumptions (i) and (ii) hold for diffusive transport of solute in homogeneous porous media, where solute plumes were observed to decay exponentially \cite{Bear61,Bear72} and so can be described accurately by integer-order diffusion equations. However, field tests showed that solute transport in heterogeneous aquifers often exhibit highly skewed and power-law decaying behavior, while sFDEs were derived under the assumption that the solutions have such behavior \cite{BenSchMee,MeeSik,MetKla04}. This is why sFDEs can accurately describe the solute transport in heterogeneous media more accurately than integer-order diffusion equations do. Consequently, they have attracted extensive research activities in the last few decades \cite{ErvRoo05,LiZha,LiChe,LiuAnh}.

However, sFDEs present new mathematical and numerical issues that are not common in the context of integer-order diffusion equations. Because of their nonlocal nature, numerical discretizations of sFDEs usually yield dense or full stiffness matrices \cite{Den,LiZha,LiuAnh,Roo}. A direct solver typically has $O(N^3)$ computational complexity and $O(N^2)$ memory requirement. A conventional Krylov subspace iterative method has $O(N^2)$ computational complexity per iteration, but may diverge due to significant amount of round-off errors \cite{WanDu13a}. In any case, the significantly increased computational complexity of numerical discretizations of sFDEs compared to their integer-order analogues is deemed computationally intractable for realistic simulations in multiple space dimensions, especially when parameter learning or control of the systems is involved.

It was discovered that the stiffness matrices of the numerical discretizations of constant-order sFDEs on a uniform partition typically possess a Toeplitz-like structure \cite{WanWanSir}, which reduces the memory requirement from $O(N^2)$ to $O(N)$ and computational complexity from $O(N^3)$ to $O(N\log N)$ per Krylov subspace iteration via the discrete fast Fourier transform. Furthermore, different preconditioners were employed to futher improve the computational efficiency and even convergence behavior \cite{BaiPan,Daniele,JinF,Jin3,LinNGSun,LinNGSun1,PanNgWang,WanDu13a,ZJinL}.

However, Ervin et al. \cite{ErvHeu16} proved that the one-dimensional constant-order constant-coefficient linear sFDEs with smooth right-hand side generate solutions with singularity at the end points of the spatial interval, which is in sharp contrast to their integer-order analogues and makes the error estimates of their numerical approximations derived under full regularity assumptions inappropriate. The singularity of the solutions to constant-order sFDEs seems to be physically irrelevant to the diffusive process of the solute, and occurs due to the incompatibility between the nonlocality of the power law decaying tails of the sFDEs inside the domain and the locality of the imposed classical boundary conditions. Intuitively, a physically relevant sFDE model should not only properly model the anomalous transport of solutes in heterogeneous porous media, but also correct the non-physical behavior of solutions to the existing sFDEs and thus maintain the smoothing nature of the diffusive transport process.

Recently, the wellposedness and smoothing properties of a variable-order linear sFDE was analyzed in \cite{ZheWan}: If the variable order has an integer limit at the boundary then the solutions have the full regularity as their integer-order analogues do; Otherwise, the solutions exhibit certain singularity at the boundary as their constant-order sFDE analogues do. Thus, the variable-order sFDE provides a feasible approach to resolve the non-physical singularity of solutions to  constant-order sFDEs near the boundary while retaining their advantages. In fact, variable-order sFDEs have been used in many applications \cite{SunChaZha,SunCheChe}, as the variable order is closely related to the fractal dimension of the porous media via the Hurst index \cite{EmbMae,MeeSik} and so can account for the changes of the geometrical structure or properties of the media. 

Some numerical studies of variable-order FDEs can be found in the literature in recent years. First-order convergence rates were proved for finite difference methods for space-time-dependent variable-order space-fractional advection-diffusion equations in one space dimension under (the artificially assumed) full regularity assumptions of the true solutions without addressing the singularity issue of the problem \cite{ZhuLiu}. A spectral collocation method using  weighted Jacobi polynomials was derived for variable-order sFDEs \cite{ZenZhaKar}, in which numerical experiments were presented to demonstrate the utility of the proposed method. The stability and convergence of an implicit alternating direct method was proved in \cite{CheLiuBur} under the assumptions of the smoothness of the true solutions and (somewhat artificial) monotonicity of the diffusivity coefficients. However, due to the impact of variable order of the FDEs, the numerical discretizations of variable-order sFDEs no longer have Toeplitz-like stiffness matrices, so the fast solvers developed for constant-order sFDEs do not apply. Also, the spectral methods do not have diagonal stiffness matrices.

In this paper we develop a fast numerical solution technique for an indirect collocation method to the homogeneous Dirichlet boundary-value problem of a one-sided variable-order sFDE in one space dimension. We approximate the stiffness matrix by a finite sum of Toeplitz-like matrices that is asymptotically convergent to the stiffness matrix. Then we develop a fast divided and conquer (DAC) solver for the approximated system by employing the Toeplitz-like structures of each summand to reduce the computational complexity from $O(N^2)$ to $O(kN\log^3N)$ and the memory requirement from $O(N^2)$ to $O(kN\log^2 N)$.

The rest of the paper is organized as follows. In Section 2 we present the model problem and its numerical discretization. In Section 3 we approximate the coefficient matrix by a sum of Toeplitz-like matrices and analyze its asymptotic consistency. In Section 4 we develop a fast DAC method for the approximated system. We perform numerical experiments to test the performance of the method in the last section.

\section{A variable-order sFDE model and its indirect collocation method}
\subsection{Model problem}

An sFDE of order $1<\alpha<2$ was proposed in \cite{del} to model the anomalously superdiffusive transport of solute in heterogeneous porous media
\begin{equation}\label{sFDE2}\begin{array}{c}
-u^{\prime \prime}(x) - d\,{}_0^C D_x^{\alpha}u(x)  = f(x),~~x\in [0,1];\\[0.05in]
u(0)=u(1)=0,
\end{array}\end{equation}
where $u^{\prime \prime}$ refers to the second-order derivative of $u$ and the Caputo fractional derivative ${}_0^C D_x^{\alpha} g$ is defined by \cite{Pod}
$$\ds {}_0^C D_x^\alpha g(x) := \frac{1}{\Gamma(2-\alpha)} \int_0^x \frac{g''(s)}{(x-s)^{\alpha-1}}ds.$$

Equation \eqref{sFDE2} is proposed based on the fact that a large amount of solute particles may travel through high permeability zones in a superdiffusive manner \cite{BenSchMee,MeeSik}, which may deviate from the transport of the solute particles in the bulk fluid phase that undergo a Fickian diffusive transport \cite{Bear72}. Therefore, in model (\ref{sFDE2}), the $-u^{\prime \prime}$ term represents the $1/(1+d)$ portion of the total solute mass undergoing the Fickian diffusive while the $-d\;{}_0^C D_x^{\alpha} u$ term refers to the $d/(1+d)$ portion of the total solute mass undergoing the superdiffusive transport in high permeability zones.

Note that in realistic applications, the reservoir may consist of different types of porous media that have different fractional dimensions. Hence, in this paper we consider the homogeneous Dirichlet boundary-value problem of the following one-sided variable-order linear sFDE as a variable extension of \eqref{sFDE2}
\begin{equation}\label{s1:e1}\begin{array}{c}
-u^{\prime \prime}(x) - d(x) ~{}_0^C D_x^{\alpha(x)} u(x) = f(x), ~~x \in (0,1); \\[0.1in]
u(0) = u(1) = 0,
\end{array}\end{equation}
where $1\leq\alpha_{min}\leq \alpha(x)\leq\alpha_{max}<2$ ($\alpha(x)\not\equiv \alpha_{min}$ if $\alpha_{min}=1$) and $d(x)\geq 0$ is the fractional diffusivity. The variable-order Caputo fractional derivative
${}_0D_x^{\alpha(x)}g$ is defined by \cite{ZenZhaKar,ZhuLiu}
$$\ds {}_0D_x^{\alpha(x)}g(x):=\frac{1}{\Gamma(2-\alpha(x))}\int_0^x\frac{g''(s)}{(x-s)^{\alpha(x)-1}}ds$$
where $\Gamma(\cdot)$ refers to the Gamma function.

\subsection{An indirect collocation method}

We rewrite (\ref{s1:e1}) in terms of $v(x) := u^{\prime \prime}(x)$
\begin{equation}\label{inteq}
v(x)+\frac{d(x)}{\Gamma(2-\alpha(x))}
\int_0^x\frac{v(s)}{(x-s)^{\alpha(x)-1}}ds=-f(x).
\end{equation}
Then the solution $u$ to model (\ref{s1:e1}) can be obtained by postprocessing
\begin{equation}\label{inteq2}
u(x)=\int_0^xv(s)(x-s)ds-x\int_0^1v(s)(1-s)ds.
\end{equation}

Let $0=x_0<x_1<\cdots<x_{N+1}=1$ be a uniform partition of $[0,1]$ with $x_n=nh$ for $n=0,1,\cdots,N+1$ and $h=1/(N+1)$. Let $\phi_n(x)$ be the piecewise-linear basis functions with $\phi_n(x_n)=1$ and $\phi_n(x_m)=0$ for $m\neq n$. Each element $v_h(x)$ in the space $S_h$ of continuous and piecewise linear functions on $x\in [0,1]$ can be represented by
\begin{equation}\label{s1:e3}
v_h(x)=\sum_{n=0}^{N+1}v_n\phi_n(x),~~v_n:=v_h(x_n).
\end{equation}
 Then an indirect collocation method for model (\ref{s1:e1}) reads:
\paragraph{Step 1} Find $v_h(x)\in S_h$ such that for $0\le n\le N+1$
\begin{equation}\label{s1:e4}\begin{array}{l}
\ds v_h(x_n)+\frac{d(x_n)}{\Gamma(2-\alpha(x_n))}\int_0^{x_n}
\frac{v_h(s)}{(x_n-s)^{\alpha(x_n)-1}}ds=-f(x_n);
\end{array}
\end{equation}
\paragraph{Step 2} Obtain an approximation $u_h(x)$ of $u(x)$ by
\begin{equation}\label{s1:e5}\begin{array}{l}
\ds u_h(x):=\int_0^xv_h(s)(x-s)ds-x\int_0^1v_h(s)(1-s)ds,\quad x\in [0,1].
\end{array}
\end{equation}
The following error estimate of the method was proved \cite{ZheWan} under the assumptions that $d$, $f$ and $\alpha$ are second-order continuously differentiable on $[0,1]$, and $1 \leq \alpha_{min}\leq\alpha(x)\leq \alpha_{max}<2$ with $\alpha(x) \not \equiv \alpha_{min}$ if $\alpha_{min} = 1$
\begin{equation}\label{n1:e1}\begin{array}{l}
\ds \|u-u_h\|:=\max_{1\leq n\leq N}|u(x_n)-u_h(x_n)|\leq \left\{\begin{array}{ll}
\ds QN^{-(3-\alpha_{max})},&~~\alpha(0)>1,\\[0.075in]
\ds QN^{-2},&~~\alpha(0)=1.
\end{array}
\right.
\end{array}\end{equation}
Instead, for $u$ sufficiently smooth, we have $\|u-u_h\|\leq QN^{-2}$.

\vspace{-0.1in}
\subsection{Solution of the numerical scheme}

For $n=0$ we obtain from (\ref{s1:e4}) that $v_0=-f(0)$. We then plug (\ref{s1:e3}) into (\ref{s1:e4}) for $1\leq n\leq N+1$ and move the terms containing $v_0$ to the right-hand side of the resulting equation to get a linear system with the unknowns  $\{v_n\}_{n=1}^{N+1}$
\begin{equation}\label{s1:e6}\begin{array}{l}
\ds \mathbf A\mathbf v=\mbf f,
\end{array}
\end{equation}
where $\mathbf A=(A_{i,j})_{i,j=1}^{N+1}\in \mathbb R^{(N+1)\times (N+1)}$ is a lower triangular matrix of the form
\begin{equation}\label{s1:e8}\begin{array}{l}
\ds A_{i,j}=\left\{\begin{array}{ll}
\ds 1+\frac{d(x_{i})h^{2-\alpha(x_i)}}{\Gamma(4-\alpha(x_i))},
& i=j,\\[0.15in]
\ds \frac{d(x_{i})h^{2-\alpha(x_i)}}{\Gamma(4-\alpha(x_i))}t_{i,j},& 1\le j\le i-1,\\[0.15in]
\ds 0,&\mbox{otherwise},
\end{array}\right.
\end{array}\end{equation}
\begin{equation}\label{s1:e8a}
t_{i,j} = (i-j-1)^{3-\alpha(x_i)}-2(i-j)^{3-\alpha(x_i)}+(i-j+1)^{3-\alpha(x_i)},
\end{equation}
$\mathbf v:=[v_1,v_2,\cdots,v_{N+1}]^T$ and $\mbf f:=[f_1,f_2,\cdots,f_{N+1}]^T$ with
$f_n$ given by
$$\begin{array}{l}
\ds f_n=-f(x_n)-d(x_n)v_0\Big(\frac{x_n^{2-\alpha(x_n)}}{\Gamma(3-\alpha(x_n))}
+\frac{(x_n-x_1)^{3-\alpha(x_n)}-x_n^{3-\alpha(x_n)}}{h\Gamma(4-\alpha(x_n))}\Big).
\end{array}$$
With $v_h(x)$ obtained from (\ref{s1:e6}), we plug $v_h(x)$ into (\ref{s1:e5}) to obtain the approximation $u_h(x)$ to $u(x)$. In particular, the discrete approximation $\mathbf u=[u_1,u_2,\cdots,u_N]^T$ with $u_n:=u_h(x_n)$ can be evaluated inductively as follows.
\begin{theorem}\label{thmind}
The $\{u_n\}_{n=1}^N$ can be computed by
\begin{equation}\label{thmind:e1}
\begin{array}{l}
\ds u_1=\frac{h^2(2v_0+v_{1})}{6}-hI,\\[0.1in]
\ds u_{n+1}=u_n+hV_{n}+\frac{h^2(2v_n+v_{n+1})}{6}-hI,~~1\leq n\leq N-1,
\end{array}
\end{equation}
where $I=\int_0^1v_h(s)(1-s)ds$
and $\{V_n\}_{n=0}^{N-1}$ is inductively generated by
\begin{equation}\label{V}\begin{array}{ll}
\ds V_0=\frac{h(v_0+v_1)}{2},&\ds V_{n}=V_{n-1}+\frac{h(v_{n}+v_{n+1})}{2},~~1\leq n\leq N-1.
\end{array}\end{equation}
\end{theorem}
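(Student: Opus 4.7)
The plan is a direct computation via telescoping. Starting from the definition $u_h(x_m)=\int_0^{x_m}v_h(s)(x_m-s)\,ds-x_m I$ with $I=\int_0^1 v_h(s)(1-s)\,ds$, I would express $u_{n+1}$ as $u_n$ plus an explicit local correction. The central identity is the additive splitting
$$\int_0^{x_{n+1}}v_h(s)(x_{n+1}-s)\,ds=\int_0^{x_n}v_h(s)(x_{n+1}-s)\,ds+\int_{x_n}^{x_{n+1}}v_h(s)(x_{n+1}-s)\,ds,$$
combined with the shift $x_{n+1}-s=(x_n-s)+h$, which breaks the first integral on the right into $u_n+x_n I$ together with $h$ times a prefix integral $\int_0^{x_n}v_h(s)\,ds$.

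For the base case $u_1$ I would evaluate $\int_0^{h}v_h(s)(h-s)\,ds-hI$ directly. Since $v_h$ is linear on $[0,h]$ with nodal values $v_0$ and $v_1$, the moment integral is a polynomial in $v_0,v_1$, and elementary calculus gives $h^2(2v_0+v_1)/6$. The same exact one-subinterval moment calculation on the generic slab $[x_n,x_{n+1}]$ yields $\int_{x_n}^{x_{n+1}}v_h(s)(x_{n+1}-s)\,ds=h^2(2v_n+v_{n+1})/6$, which accounts for the middle term of the recursion.

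It remains to identify the running quantity in \eqref{V} with a prefix integral of $v_h$. Because $v_h$ is continuous and piecewise linear on a uniform mesh, the trapezoidal rule is exact on each subinterval, so $\int_{x_k}^{x_{k+1}}v_h(s)\,ds=h(v_k+v_{k+1})/2$. Summing these over $k$ shows that the recursion in \eqref{V} is exactly the prefix cumulative sum of these local contributions, which is precisely the prefix integral emerging from the shift step above. Substituting everything into the decomposition and using $x_{n+1}I-x_n I=hI$ to collapse the boundary-term contribution yields the stated inductive formula \eqref{thmind:e1}.

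There is no conceptual obstacle beyond careful bookkeeping: every integral that appears is either a moment integral of a linear polynomial on a single subinterval, computable in closed form, or a prefix trapezoidal sum captured by $V$. The only step that needs attention is aligning the subscript on the partial sum $V$ with the upper limit of the prefix integral so that the shift $x_{n+1}-s=(x_n-s)+h$ is tracked consistently through the recursion.
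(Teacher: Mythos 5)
Your proposal is correct and follows essentially the same route as the paper's proof: split $\int_0^{x_{n+1}}$ at $x_n$, write $x_{n+1}-s=(x_n-s)+h$, evaluate the single-slab moment integral $\int_{x_n}^{x_{n+1}}v_h(s)(x_{n+1}-s)\,ds=h^2(2v_n+v_{n+1})/6$ in closed form, and identify the prefix integral $\int_0^{x_n}v_h(s)\,ds$ with the exact trapezoidal sum $V$. Your closing caution about aligning the subscript of $V$ with the upper limit of the prefix integral is well placed: the recursion \eqref{V} as printed generates $V_n=\int_0^{x_{n+1}}v_h(s)\,ds$, so the quantity actually produced by the derivation is $hV_{n-1}$ rather than $hV_n$, an off-by-one in the statement's indexing rather than a flaw in your argument.
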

\begin{proof}
For (\ref{s1:e5}) with $x=x_1$, we obtain the first equation of (\ref{thmind:e1}) by a direct calculation. For (\ref{s1:e5}) with $x=x_{n+1}$ and $1\leq n\leq N-1$, we split the first integral on $(0,x_{n+1})$ to those on $(0,x_n)$ and $(x_n,x_{n+1})$ and split the kernel $(x_{n+1}-s)$ to $(x_{n}-s)+h$ in the first resulting integral to obtain
\begin{equation}\label{thmind:e2}\begin{array}{rl}
\ds u_{n+1}&\ds=\int_0^{x_{n+1}}v_h(s)(x_{n+1}-s)ds-x_{n+1}\int_0^1v_h(s)(1-s)ds\\[0.15in]
&\ds =\int_0^{x_{n}}v_h(s)(x_{n+1}-s)ds
+\int_{x_n}^{x_{n+1}}v_h(s)(x_{n+1}-s)ds-x_{n+1}I\\[0.15in]
&\ds=\int_0^{x_{n}}v_h(s)(x_{n}-s)ds+h\int_0^{x_{n}}v_h(s)ds\\[0.15in]
&\ds\quad\quad +\frac{h^2(2v_n+v_{n+1})}{6}-x_{n+1}I.
\end{array}\end{equation}

Note that as $v_h(x)$ is a piecewise linear function, $V_n$ defined by (\ref{V}) represents the integration of $v_h(x)$ over $(0,x_{n})$. Therefore, the second term on the right-hand side of (\ref{thmind:e2}) equals to $hV_n$. From (\ref{s1:e5}) with $x=x_{n}$ we have
$$\begin{array}{rl}
\ds u_{n}&\ds=\int_0^{x_{n}}v_h(s)(x_{n}-s)ds-x_{n}I.
\end{array}$$
We plug this into (\ref{thmind:e2}) to get
$$\begin{array}{ll}
\ds u_{n+1}&\ds=u_n+x_nI+hV_{n}+\frac{h^2(2v_n+v_{n+1})}{6}-x_{n+1}I\\[0.125in]
\ds&\ds=u_n+hV_{n}+\frac{h^2(2v_n+v_{n+1})}{6}-hI,
\end{array}$$
which finishes the proof.
\end{proof}

\section{An approximated collocation scheme}

As $\mathbf A$ is a lower triangular matrix, a direct solution of the numerical scheme requires $O(N^2)$ storage and has $O(N^2)$ computational complexity. Once we obtained $\mbf v$, only $O(N)$ storage and $O(N)$ operations are needed to compute $\mbf u$ by Theorem \ref{thmind}. That is, the main task lies in reducing the memory requirement and the computational complexity of (\ref{s1:e6}). To do so, we approximate $\mbf A$ by a finite sum of Toeplitz-like matrices.

\begin{theorem}\label{thm2}
For $1\leq j\leq i-1$, $t_{i,j}$ defined by (\ref{s1:e8a}) can be approximated by
\begin{equation}\label{thm2:e1}\begin{array}{l}
\ds t_{i,j}\approx 2(i-j)^{3-\bar{\alpha}}\Big[1+(\bar{\alpha}-\alpha(x_i))\ln(i-j)+
\frac{(\bar{\alpha}-\alpha(x_i))^2}{2!}\ln^2(i-j)\\[0.15in]
\qquad\quad\ds+\cdots+
\frac{\big(\bar{\alpha}-\alpha(x_i)\big)^s}{s!}\ln^{s}(i-j)\Big]\cdot \bigg[{{3-\alpha(x_i)}\choose {2}}\frac{1}{(i-j)^2}\\[0.15in]
\ds\qquad\quad +{{3-\alpha(x_i)}\choose {4}}\frac{1}{(i-j)^4}+\cdots+{3-\alpha(x_i)\choose 2k}\frac{1}{(i-j)^{2k}}\bigg]
\end{array}\end{equation}
for some $k,s\in \mathbb N^+$ with the residue $R^{i,j}_{s,k}$ given by
\begin{equation}\label{sub1:e10}\begin{array}{l}
\ds\hspace{-0.1in} R^{i,j}_{s,k}=(i-j)^{3-\bar{\alpha}}\hat R^{i,j}_{s+1}
\bigg [ \Big(1-\frac{1}{i-j}\Big)^{3-\alpha(x_i)} -2+\Big(1+\frac{1}{i-j}\Big)^{3-\alpha(x_i)}\,\bigg]\\[0.175in]
\ds\qquad\qquad\qquad+(i-j)^{3-\alpha(x_i)}R^{i,j}_{2k+2}
-(i-j)^{3-\bar{\alpha}}\hat R^{i,j}_{s+1}R^{i,j}_{2k+2}.
\end{array}
\end{equation}
\end{theorem}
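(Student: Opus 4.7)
The plan is to decouple the two sources of coupling in $t_{i,j}$: the dependence of the exponent $3-\alpha(x_i)$ on the row index $i$, and the mixing of that exponent with the integer argument $i-j$. I would first factor out $(i-j)^{3-\alpha(x_i)}$ to write
\[
t_{i,j} = (i-j)^{3-\alpha(x_i)}\!\left[\left(1-\tfrac{1}{i-j}\right)^{3-\alpha(x_i)} - 2 + \left(1+\tfrac{1}{i-j}\right)^{3-\alpha(x_i)}\right],
\]
which is legitimate for $i-j\geq 1$ since $1/(i-j)\leq 1$ and $3-\alpha(x_i)\in(1,2]$ places us in the radius of convergence of the generalized binomial series. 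Expanding each factor $(1\pm 1/(i-j))^{3-\alpha(x_i)}$ by that series and adding, the odd-power terms cancel, leaving $2\sum_{n=1}^{\infty}\binom{3-\alpha(x_i)}{2n}(i-j)^{-2n}$. Truncating the sum at $n=k$ produces the second bracketed factor in (\ref{thm2:e1}), with the tail beginning at $n=k+1$ defining the residue $R^{i,j}_{2k+2}$.

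Next, to free the prefactor $(i-j)^{3-\alpha(x_i)}$ from its row-dependent exponent, I would fix a reference value $\bar\alpha$, write $(i-j)^{3-\alpha(x_i)} = (i-j)^{3-\bar\alpha}\cdot e^{(\bar\alpha-\alpha(x_i))\ln(i-j)}$, and Taylor-expand the exponential to obtain $\sum_{r=0}^{\infty}\frac{(\bar\alpha-\alpha(x_i))^r}{r!}\ln^r(i-j)$. Truncating at $r=s$ and calling its tail $\hat R^{i,j}_{s+1}$ produces the first bracketed factor in (\ref{thm2:e1}). Multiplying the two truncated factors together, along with the overall factor of $2$ pulled out above, gives the stated approximation.

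Finally, I would derive the explicit residue (\ref{sub1:e10}) by a short inclusion–exclusion computation: writing each exact factor as ``truncation $+$ tail'' and expanding, the difference between the exact $t_{i,j}$ and the product approximation telescopes into (tail of the logarithmic Taylor factor)$\,\times\,$(exact bracketed binomial sum) $+$ (exact power)$\,\times\,$(tail of the binomial expansion) $-$ (tail)$\,\times\,$(tail), which is exactly what (\ref{sub1:e10}) displays. The only step that requires some care is tracking where the factor of $2$ from the binomial-sum identity is absorbed (it is hidden inside the bracket containing $R^{i,j}_{2k+2}$ in the first line of (\ref{sub1:e10})); no genuine obstacle arises, since both the binomial and exponential series converge absolutely in the relevant parameter range, so all rearrangements above are justified termwise.
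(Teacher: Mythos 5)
Your proposal is correct and follows essentially the same route as the paper: a generalized binomial expansion of $(1\pm 1/(i-j))^{3-\alpha(x_i)}$ with cancellation of odd powers, an exponential/Taylor expansion of $(i-j)^{\bar\alpha-\alpha(x_i)}$ to decouple the row-dependent exponent, and an inclusion--exclusion bookkeeping of the two tails to get (\ref{sub1:e10}). The only cosmetic difference is that the paper defines $R^{i,j}_{2k+2}$ and $\hat R^{i,j}_{s+1}$ as Lagrange-form remainders of finite Taylor expansions rather than as tails of the convergent infinite series, which changes nothing in the structure of the argument.
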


\begin{proof} We decouple the nonlinear dependence of $i$ and $j$ in $t_{i,j}$ (cf (\ref{s1:e8a})) for $j\leq i-1$ by applying the Taylor's expansion of power functions to the first and the third terms on the right-hand side of (\ref{s1:e8a}) as follows
\begin{equation}\label{sub1:e3}\begin{array}{l}
\ds (i-j \pm 1)^{3-\alpha(x_i)} = (i-j)^{3-\alpha(x_i)}\Big(1 \pm \frac{1}{i-j}\Big)^{3-\alpha(x_i)}\\[0.15in]
\ds \quad = (i-j)^{3-\alpha(x_i)}\bigg [1 \pm {{3-\alpha(x_i)}\choose{1}}\frac{1}{i-j}
+{{3-\alpha(x_i)}\choose{2}}\frac{1}{(i-j)^2}\\[0.15in]
\ds \qquad \pm {{3-\alpha(x_i)}\choose{3}}\frac{1}{(i-j)^3} +\cdots+{3-\alpha(x_i)\choose 2k}\frac{1}{{(i-j)}^{2k}}
\\[0.15in]
\ds\qquad \pm {3-\alpha(x_i)\choose 2k+1}\frac{1}{{(i-j)}^{2k+1}} + R_{2k+2}^{i,j,\pm} \bigg],
\end{array}\end{equation}
where $R_{2k+2}^{i,j,pm}$ are the remainders of the Taylor's expansion
\begin{equation}\label{sub:e4}
R_{2k+2}^{i,j,\pm} :={3-\alpha(x_i)\choose 2k+2}\Big(1 \pm \frac{\theta_\pm}{i-j}\Big)^{1-\alpha(x_i)-2k}
\frac1{(i-j)^{2k+2}},~~\theta_\pm \in (0,1).
\end{equation}
Here $\theta_\pm = \theta_\pm(i,j,k)$. We plug (\ref{sub1:e3}) into (\ref{s1:e8a}) to obtain
\begin{equation}\label{sub1:e4}\begin{array}{l}
\ds t_{i,j} = (i-j)^{3-\alpha(x_i)} \bigg [ {{3-\alpha(x_i)}\choose {2}}\frac{2}{(i-j)^2}
+{{3-\alpha(x_i)}\choose {4}}\frac{2}{(i-j)^4}+\cdots\\[0.15in]
\ds\qquad\qquad\qquad+{3-\alpha(x_i)\choose 2k}\frac{2}{(i-j)^{2k}} + R^{i,j}_{2k+2} \bigg],\\[-0.25in]
\end{array}\end{equation}
\begin{equation}\label{s1:e8b}\begin{array}{ll}
\ds R^{i,j}_{2k+2}&:\ds=R_{2k+2}^{i,j,-}+R_{2k+2}^{i,j,+}={3-\alpha(x_i)\choose 2k+2}\frac{1}{(i-j)^{2k+2}}\\[0.2in]
\ds &\ds\qquad\times\bigg [\Big(1-\frac{\theta_-}{i-j}\Big)^{1-\alpha(x_i)-2k}
+\Big(1+\frac{\theta_+}{i-j}\Big)^{1-\alpha(x_i)-2k}\,\bigg].
\end{array}\end{equation}

Due to the impact of the variable order $\alpha(x)$, the matrix of entries $(i-j)^{3-\alpha(x_i)}$ is not Toeplitz. The assumption on $\alpha(x)$ (preceding \eqref{n1:e1}) implies $\bar{\alpha}:=(\alpha_{max}+\alpha_{min})/2 > 1$.  We use the Taylor's expansion of $a^x$ with $a>0$ and $0<x<1$ to obtain
\begin{equation}\label{sub1:e66}\begin{array}{rl}
\ds (i-j)^{3-\alpha(x_i)}&\ds=(i-j)^{\bar{\alpha}-\alpha(x_i)}(i-j)^{3-\bar{\alpha}}\\[0.025in]
&\ds = \bigg[1 + \big(\bar{\alpha}-\alpha(x_i)\big)\ln(i-j) 
+ \frac{\big(\bar{\alpha}-\alpha(x_i)\big)^2}{2!}\ln^2(i-j) \\[0.1in]
& \ds \quad + \cdots +\frac{(\bar{\alpha}-\alpha(x_i))^s}{s!}\ln^s(i-j)+\hat R^{i,j}_{s+1}\bigg](i-j)^{3-\bar{\alpha}},
\end{array}\end{equation}
\begin{equation}\label{sub1:e6}\begin{array}{l}
\ds \hat R^{i,j}_{s+1}=\frac{(\bar{\alpha}-\eta)^{s+1} \ln^{s+1} (i-j)}{(s+1)!},\quad \eta\in
\left\{\begin{array}{l}
\ds (\bar{\alpha},\alpha(x_i)),~~\mbox{if}~\bar\alpha\leq \alpha(x_i),\\[0.1in]
(\alpha(x_i),\bar{\alpha}),~~\mbox{otherwise}.
\end{array}
\right.
\end{array}\end{equation}

Substituting (\ref{sub1:e66}) into (\ref{sub1:e4}) yields
\begin{equation}\label{sub1:e8}\begin{array}{l}
\ds t_{i,j}= 2(i-j)^{3-\bar{\alpha}}\bigg[1+\big(\bar{\alpha}-\alpha(x_i)\big)\ln(i-j)+
\frac{\big(\bar{\alpha}-\alpha(x_i)\big)^2}{2!}\ln^2(i-j)\\[0.15in]
\qquad\quad\ds+\cdots+
\frac{\big(\bar{\alpha}-\alpha(x_i)\big)^s}{s!}\ln^{s}(i-j)+\hat R_{s+1}^{i,j}\bigg] \bigg[{{3-\alpha(x_i)}\choose {2}}\frac{1}{(i-j)^2}\\[0.15in]
\ds\qquad\quad +{{3-\alpha(x_i)}\choose {4}}\frac{1}{(i-j)^4}+\cdots+{3-\alpha(x_i)\choose 2k}\frac{1}{(i-j)^{2k}}+R^{i,j}_{2k+2}\bigg]\\[0.15in]
\ds\quad\,\,\,= 2(i-j)^{3-\bar{\alpha}}\bigg [ 1+(\bar{\alpha}-\alpha(x_i))\ln(i-j)+
\frac{(\bar{\alpha}-\alpha(x_i))^2}{2!}\ln^2(i-j)\\[0.15in]
\qquad\quad\ds+\cdots+ \frac{\big(\bar{\alpha}-\alpha(x_i)\big)^s}{s!}\ln^{s}(i-j) \bigg] \bigg[{{3-\alpha(x_i)}\choose {2}}\frac{1}{(i-j)^2}\\[0.15in]
\ds\qquad\quad +{{3-\alpha(x_i)}\choose {4}}\frac{1}{(i-j)^4}+\cdots+{3-\alpha(x_i)\choose 2k}\frac{1}{(i-j)^{2k}}\bigg]+R^{i,j}_{s,k},
\end{array}\end{equation}
$$\begin{array}{l}
\ds R_{s,k}^{i,j}  = 2(i-j)^{3-\bar \alpha} \bigg[ \hat R^{i,j}_{s+1} \bigg(R^{i,j}_{2k+2} + \sum_{m=0}^k {{3-\alpha(x_i)}\choose {2m}}\frac{1}{(i-j)^{2m}} \bigg)\\[0.15in]
\ds \qquad\qquad +R^{i,j}_{2k+2} \bigg (\sum_{m=0}^s\frac{\big(\bar{\alpha}-\alpha(x_i)\big)^m}{m!}\ln^m(i-j) +\hat R_{s+1} \bigg) -R^{i,j}_{2k+2}\hat R^{i,j}_{s+1}\bigg].
\end{array}$$
Then we replace the terms in the first and second brackets by (\ref{sub1:e4}) and (\ref{sub1:e66}), respectively, to obtain (\ref{sub1:e10}). Dropping the truncation error $R^{i,j}_{s,k}$ in (\ref{sub1:e8}), we get the approximation (\ref{thm2:e1}) of $t_{i,j}$ for $j\leq i-1$.
\end{proof}

Replacing $t_{i,j}$ in the entries of $\mbf A$ by the right-hand side of (\ref{thm2:e1}) leads to the corresponding approximated system 
\begin{equation}\label{app}
\tilde{\mbf A} {\mbf v}=\mbf f,~~\tilde{\mbf A}:=(\tilde A_{i,j})_{i,j=1}^{N+1}.
\end{equation}

\begin{theorem}\label{thm3}
For $j\leq i-1$ and $k,s\in \mathbb N^+$ with $ s>e\ln N/2-1$, the local truncation error $R^{i,j}_{s,k}$ can be bounded by
\begin{equation}\label{thm3:e1}\begin{array}{l}
\ds |R_{s,k}^{i,j}| \leq\frac{Q}{\sqrt{s+1}{(i-j)^{\bar{\alpha}-1}}}\\[0.15in]
\quad\qquad\ds\quad
+\left\{\begin{array}{ll}
\ds \frac1{(2k)^{4-\alpha(x_i)}},&~i-j=1;\\[0.05in]
\ds \frac{1}{(2k)^{4-\alpha(x_i)}(i-j-1)^{2k+\alpha(x_i)-1}},&~i-j\geq 2.
\end{array}
\right.
\end{array}
\end{equation}
Consequently, we have for $i-j\geq 2$
\begin{equation}\label{thm3:e2}\begin{array}{ll}
\ds\max_{1\le i,j\le N+1}|\tilde{A}_{i,j}-A_{i,j}|&\ds\le Ch^{2-\alpha_{max}}\Big(\frac{1}{\sqrt{s+1}(i-j)^{\bar{\alpha}-1}}\\
&\ds\quad
+\frac{1}{(2k)^{4-\alpha_{max}}(i-j-1)^{2k+\alpha_{max}-1}} \Big).
\end{array}
\end{equation}

That is, the approximation scheme (\ref{app}) is asymptotically consistent with the original problem (\ref{s1:e6}) with respect to $s$ and $k$.
\end{theorem}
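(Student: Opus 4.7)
My plan is to estimate each of the three summands in $R^{i,j}_{s,k}$ in (\ref{sub1:e10})---call them $T_1,T_2,T_3$---separately using two micro-estimates, and then combine. The first micro-estimate bounds $\hat R^{i,j}_{s+1}$: by Stirling's inequality $(s+1)!\geq\sqrt{2\pi(s+1)}((s+1)/e)^{s+1}$ together with $|\bar\alpha-\eta|\leq(\alpha_{max}-\alpha_{min})/2<1/2$ (a consequence of $1\leq\alpha_{min}\leq\alpha_{max}<2$) and $\ln(i-j)\leq\ln N$, one obtains
$$|\hat R^{i,j}_{s+1}|\leq\frac{1}{\sqrt{2\pi(s+1)}}\left[\frac{(\alpha_{max}-\alpha_{min})\,e\,\ln N}{2(s+1)}\right]^{s+1}.$$
The hypothesis $s>e\ln N/2-1$ forces the bracket below $1$, yielding $|\hat R^{i,j}_{s+1}|\leq Q/\sqrt{s+1}$. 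In the special case $i-j=1$, $\ln(i-j)=0$ and hence $\hat R^{i,j}_{s+1}=0$.

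The second micro-estimate bounds $R^{i,j}_{2k+2}$ in (\ref{s1:e8b}). Via the classical Gamma-function asymptotic $|\binom{\gamma}{n}|\leq Q/n^{1+\gamma}$, uniform in $\gamma\in[\alpha_{min},\alpha_{max}]$, applied at $\gamma=3-\alpha(x_i)$, one has $|\binom{3-\alpha(x_i)}{2k+2}|\leq Q/(2k)^{4-\alpha(x_i)}$. For $i-j\geq 2$ the worst-case Lagrange factor $(1-\theta_-/(i-j))^{1-\alpha(x_i)-2k}$ with $\theta_-\in(0,1)$ is at most $((i-j)/(i-j-1))^{2k+\alpha(x_i)-1}$; collecting,
$$|R^{i,j}_{2k+2}|\leq\frac{Q}{(2k)^{4-\alpha(x_i)}(i-j)^{3-\alpha(x_i)}(i-j-1)^{2k+\alpha(x_i)-1}}.$$
For the degenerate case $i-j=1$ the Lagrange remainder blows up, so I would instead invoke absolute convergence of the generalized binomial series at $x=-1$ (valid since $3-\alpha>0$): $\sum_m(-1)^m\binom{3-\alpha}{m}=0$, so $R^{i,j,-}_{2k+2}$ is an alternating tail bounded by $|\binom{3-\alpha}{2k+2}|$, and the same bound holds for $R^{i,j,+}_{2k+2}$ directly from its Lagrange formula.

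Assembly: for $i-j\geq 2$, the Taylor cancellation $(1-1/(i-j))^{3-\alpha(x_i)}-2+(1+1/(i-j))^{3-\alpha(x_i)}=O((i-j)^{-2})$ (the constant and linear terms cancel) gives $|T_1|\leq Q(i-j)^{1-\bar\alpha}|\hat R^{i,j}_{s+1}|\leq Q/[\sqrt{s+1}(i-j)^{\bar\alpha-1}]$; the summand $T_2=(i-j)^{3-\alpha(x_i)}R^{i,j}_{2k+2}$ absorbs the $(i-j)^{3-\alpha(x_i)}$ in the second micro-estimate to produce the second branch of (\ref{thm3:e1}); and $T_3$ is strictly dominated by $T_2$ because of the small factor $|\hat R^{i,j}_{s+1}|$. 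For $i-j=1$, the simplifications above give $T_1=T_3=0$ while $|T_2|\leq Q/(2k)^{4-\alpha(x_i)}$, matching the first branch. Finally, (\ref{thm3:e2}) follows by multiplying (\ref{thm3:e1}) by the prefactor $d(x_i)h^{2-\alpha(x_i)}/\Gamma(4-\alpha(x_i))$ in (\ref{s1:e8}), using boundedness of $d$, a uniform positive lower bound on $\Gamma(4-\alpha(x_i))$, the inequality $h^{2-\alpha(x_i)}\leq h^{2-\alpha_{max}}$ for $h<1$, and uniform replacement of $\alpha(x_i)$ by $\alpha_{max}$ in the remaining exponents.

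The main technical obstacle is calibrating Stirling so that the threshold $s>e\ln N/2-1$ is exactly what suffices: the factor $1/2$ there is precisely what is produced by the bound $|\bar\alpha-\eta|<1/2$, and without this strict inequality one would need a larger $s$ to force the critical ratio $(\bar\alpha-\eta)e\ln(i-j)/(s+1)$ below $1$. A secondary obstacle is the boundary case $i-j=1$, where the Lagrange remainder formula (\ref{sub:e4}) diverges and must be replaced by the absolute-convergence argument for the binomial series at its endpoint $x=-1$.
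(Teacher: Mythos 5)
Your proposal follows the paper's route essentially step for step: the same two micro-estimates (Stirling together with $|\bar{\alpha}-\eta|<1/2$ for $\hat R^{i,j}_{s+1}$, and the Gamma-function asymptotics $|\binom{3-\alpha(x_i)}{2k+2}|\le Q(2k)^{-(4-\alpha(x_i))}$ combined with the Lagrange factor for $R^{i,j}_{2k+2}$), the same three-term assembly of (\ref{sub1:e10}) with the second-order cancellation in the bracket of $T_1$, and the same passage to (\ref{thm3:e2}). Your observation that $\hat R^{i,j}_{s+1}=0$ when $i-j=1$, so that $T_1=T_3=0$ there, is a clean touch the paper does not make explicit. (One shared looseness: ``$T_3$ is strictly dominated by $T_2$'' compares $(i-j)^{3-\bar{\alpha}}\hat R^{i,j}_{s+1}R^{i,j}_{2k+2}$ with $(i-j)^{3-\alpha(x_i)}R^{i,j}_{2k+2}$, and the ratio $(i-j)^{\alpha(x_i)-\bar{\alpha}}|\hat R^{i,j}_{s+1}|$ is not obviously bounded when $\alpha(x_i)>\bar{\alpha}$ under only $s>e\ln N/2-1$; the paper makes the identical leap in (\ref{sub1:e9}).)

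There is, however, one step that fails: the treatment of $R^{i,j,-}_{2k+2}$ at $i-j=1$. You rightly note that the Lagrange form (\ref{sub:e4}) is unusable at the endpoint $x=-1$, but the proposed replacement is wrong. For $\gamma=3-\alpha(x_i)\in(1,2)$ and $m\ge 3$, the coefficient $\binom{\gamma}{m}=\gamma(\gamma-1)\cdots(\gamma-m+1)/m!$ contains exactly $m-2$ negative factors and hence has sign $(-1)^m$; therefore $(-1)^m\binom{\gamma}{m}=|\binom{\gamma}{m}|>0$, and the tail $\sum_{m\ge 2k+2}(-1)^m\binom{\gamma}{m}$ is a series of \emph{positive} terms, not an alternating one. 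The alternating-series bound by the first term does not apply; the honest estimate is $\sum_{m\ge 2k+2}|\binom{\gamma}{m}|\le Q\sum_{m\ge 2k+2}m^{-(4-\alpha(x_i))}\asymp (2k)^{-(3-\alpha(x_i))}$, one power of $k$ weaker than the $(2k)^{-(4-\alpha(x_i))}$ needed for the first branch of (\ref{thm3:e1}). (The paper's own handling of this case --- asserting that $R^{i,j,-}_{2k+2}$ ``vanishes'' because $(i-j-1)^{3-\alpha(x_i)}=0$ --- is not airtight for the same reason: the partial sum $\sum_{m=0}^{2k+1}(-1)^m\binom{\gamma}{m}$ does not vanish, so its deficit from $0=(1-1)^{\gamma}$ is exactly the positive tail above and must still be accounted for.) Everything for $i-j\ge 2$, and hence the consistency estimate (\ref{thm3:e2}), survives intact; only the $i-j=1$ branch needs repair, either by accepting the exponent $3-\alpha(x_i)$ or by evaluating the near-diagonal entries exactly, as the algorithm in Section \ref{DAC} in effect does.
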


\begin{proof} We bound $R^{i,j}_{2k+2}$ and $\hat R^{i,j}_{s+1}$ respectively. The binomial coefficients in (\ref{s1:e8b}) can be represented in terms of the gamma functions as follows 
\begin{equation*}\begin{array}{ll}
\ds {3-\alpha(x_i)\choose 2k+2}
&\ds=\frac{(3-\alpha(x_i))(3-\alpha(x_i)-1)\cdots(3-\alpha(x_i)-(2k+1))}{(2k+2)!}\\[0.15in]
&\hspace{-0.55in}\ds=\frac{(-1)^{2k+2}(2k+1-(3-\alpha(x_i)))(2k-(3-\alpha(x_i)))\cdots(0-(3-\alpha(x_i)))}{(2k+2)!}\\[0.15in]
&\hspace{-0.55in}\ds=(-1)^{2k+2}\frac{\Gamma(2k+2-(3-\alpha(x_i)))}{\Gamma(2k+3)\Gamma(2-(3-\alpha(x_i)))}(1-(3-\alpha(x_i)))(0-(3-\alpha(x_i)))\\[0.15in]
&\hspace{-0.55in}\ds=(-1)^{2k+2}(\alpha(x_i)-2)(\alpha(x_i)-3)\frac{\Gamma(2k+\alpha(x_i)-1)}{\Gamma(2k+3)\Gamma(\alpha(x_i)-1)}.
\end{array}
\end{equation*}
We use the asymptotic expansions of Gamma functions (\cite[Eq. 1.5.15]{KilSri})
$$\begin{array}{l}
\ds \frac{\Gamma(z+a)}{\Gamma(z+b)}=z^{a-b}\Big(1+O\Big(\frac{1}{z}\Big)\Big),~~z+a>0,~~z\rightarrow +\infty, \\[-0.1in]
\end{array}$$
to get
\begin{equation}\label{sub1:e6.5}\begin{array}{ll}
\ds \Big|{3-\alpha(x_i)\choose 2k+2}\Big|\le \frac{Q}{(2k)^{4-\alpha(x_i)}}.
\end{array}
\end{equation}
We note that for $i-j=1$ the left-hand side of (\ref{sub1:e3}) with the minus sign in ``$\pm$'' vanishes, so $R^{i,j,-}_{2k+2}$, and thus the first term on the right-hand side of the equal sign``=" in (\ref{sub1:e7}), vanishes.  We thus bound the remaining factors on the right hand of (\ref{s1:e8b}) by
\begin{equation}\label{sub1:e7}\begin{array}{ll}
\ds \bigg| \bigg( \Big (1 - \frac{\theta_-}{i-j} \Big)^{1-\alpha(x_i)-2k}
+ \Big ( 1 +\frac{\theta_+}{i-j} \Big)^{1-\alpha(x_i)-2k}\Big)\frac{1}{(i-j)^{2k+2}} \bigg|\\[0.15in]
\ds~=\frac{1}{(i-j)^{3-\alpha(x_i)}}\bigg(\frac{1}{(i-j-{\theta_-})^{2k+\alpha(x_i)-1}} +
\frac{1}{(i-j+{\theta_+})^{2k+\alpha(x_i)-1}} \bigg)\\[0.175in]
\ds~\le \left\{\begin{array}{ll}
\ds 1,&~~i-j=1;\\[0.1in]
\ds \frac{2(i-j)^{\alpha(x_i)-3}}{(i-j-\theta_-)^{2k+\alpha(x_i)-1}},&~~i-j\geq 2,~~\theta_-\in (0,1),
\end{array}
\right.
\end{array}
\end{equation}
We incorporate the proceeding estimates to obtain the estimates of the local truncation error $R_{2k+2}^{i,j}$
\begin{equation}\label{aa}\begin{array}{l}
\ds \hspace{-0.15in}|R_{2k+2}^{i,j}|\le  \left\{
\begin{array}{ll}
\ds \frac{Q}{(2k)^{4-\alpha(x_i)}},&~~i-j=1;\\[0.15in]
\ds \frac{Q(i-j)^{\alpha(x_i)-3}}{(2k)^{4-\alpha(x_i)}(i-j-\theta_-)^{2k+\alpha(x_i)-1}},&~i-j\geq 2,~~\theta_-\in (0,1).
\end{array}
\right.
\end{array}
\end{equation}


We note that $\eta$ in (\ref{sub1:e6}) satisfies $(\bar{\alpha}-\eta)^{s+1}\le 2^{-(s+1)}$ and use the Stirling's formula for gamma functions to get 
\begin{equation*}\begin{array}{l}
\ds \Big|\frac{\ln^{s+1} (i-j)}{(s+1)!}\Big| \leq\frac{Q\ln^{s+1}(i-j)}{\sqrt{2\pi(s+1)}
\big((s+1)/e\big)^{s+1}} \le \frac{Q}{\sqrt{s+1}}\Big(\frac{e\ln (i-j)}{s+1}\Big)^{s+1}.
\end{array}\end{equation*}
We thus bound $\hat R_{s+1}^{i,j}$ by
\begin{equation}\label{sub1:e8b}\begin{array}{l}
\ds |\hat R_{s+1}^{i,j}|\le \frac{Q}{\sqrt{s+1}}
\Big(\frac{e\ln (i-j)}{2(s+1)}\Big)^{s+1}.
\end{array}
\end{equation}
We combine (\ref{aa}) and (\ref{sub1:e8b}) to bound $R^{i,j}_{s,k}$ in (\ref{sub1:e10}) by
\begin{equation}\label{sub1:e9}\begin{array}{ll}
\ds \hspace{-0.15in} |R_{s,k}^{i,j}|&\ds\le (i-j)^{3-\bar{\alpha}} \big | \hat R_{s+1}^{i,j} \big |\,\bigg |\Big(1-\frac{1}{i-j}\Big)^{3-\alpha(x_i)} -2+\Big(1+\frac{1}{i-j}\Big)^{3-\alpha(x_i)} \bigg|\\[0.15in]
&\ds\qquad\qquad+(i-j)^{3-\alpha(x_i)} \big | R_{2k+2}^{i,j} \big | +
\big | R_{2k+2}^{i,j} \hat R_{s+1}^{i,j} \big |\\[0.1in]
&\ds\leq \frac{Q|\hat R_{s+1}^{i,j}|}{(i-j)^{\bar{\alpha}-1}}+
(i-j)^{3-\alpha(x_i)}|R_{2k+2}^{i,j}|
+|R_{2k+2}^{i,j}R_{s+1}^{i,j}|\\[0.15in]
&\ds\leq\frac{Q}{\sqrt{s+1}{(i-j)^{\bar{\alpha}-1}}}\Big(\frac{e\ln (i-j)}{2(s+1)}\Big)^{s+1}\\[0.15in]
&\ds\quad+\left\{
\begin{array}{ll}
\ds \frac{Q}{(2k)^{4-\alpha(x_i)}},&~i-j=1;\\[0.1in]
\ds \frac{Q}{(2k)^{4-\alpha(x_i)}(i-j-\theta_-)^{2k+\alpha(x_i)-1}},&~i-j\geq 2,~\theta_- \in (0,1),
\end{array}
\right.
\end{array}
\end{equation}
where in the first inequality we have used the Taylor's expansion as in (\ref{sub1:e3}) and the expression (\ref{sub:e4}) for $R^{i,j,\pm}_{2k+2}$ with $k=0$
$$\begin{array}{l}
\ds \Big|\Big(1-\frac{1}{i-j}\Big)^{3-\alpha(x_i)}
-2+\Big(1+\frac{1}{i-j}\Big)^{3-\alpha(x_i)}\Big|\\[0.15in]\ds \qquad\qquad= \left\{
\begin{array}{ll}
\ds 2^{3-\alpha(x_i)}-2\leq 2,&~i-j=1;\\[0.1in]
\ds |R^{i,j,1}_{2}+R^{i,j,2}_{2}|\leq\frac{Q}{(i-j)^2},&~i-j\geq 2.
\end{array}
\right.
\end{array} $$
By setting $\ds s>\frac{e\ln N}{2}-1$ in the first term on the right-hand side of (\ref{sub1:e9}) we obtain (\ref{thm3:e1}) and \eqref{thm3:e2}.
\end{proof}

\section{A fast divided-and-conquer solver}\label{DAC}

We develop a fast DAC solver for the collocation system (\ref{app}). Due to the impact of the variable order in the sFDE (\ref{s1:e1}), the stiffness matrix $\mathbf A$ in (\ref{s1:e6}) no longer has Toeplitz structure so the DAC algorithm \cite{MSun,FMWang} developed for constant-order FDEs cannot apply. We develop a fast DAC method by expressing the stiffness matrix $\mbf {\tilde A}$ in (\ref{app}) as
\begin{equation}\label{DAC:e4}\begin{array}{cc}
\mbf{\tilde A}&\ds=\left[\begin{array}{cc}
 \ds \mbf{\tilde A}_{N'}&\mbf 0\\[0.05in]
  \ds\mbf{\tilde{\Gamma}}_{N'}&\mbf{\hat{\tilde {A}}}_{N'}
\end{array}\right].
\end{array}
\end{equation}
\begin{algorithm}
\caption{The fast approximated DAC algorithm(denote $c=\lfloor \frac{e}{2}\ln N\rfloor $)}
\label{alg:B}
$$\begin{array}{l}
\ds\mbox{function} \mathbf v_N=\mbox{FDAC}(\mathbf
{\tilde{A}}_N,\mathbf f_N)\\[0.1in]
\ds \quad\mbox{if } N\le c\\[0.1in]
\ds \qquad ~\mathbf v_N=\mathbf {\tilde{A}}_N^{-1} \mathbf f_N\\[0.1in]
\ds \quad\mbox{else}\\[0.1in]
\ds\qquad \mathbf v_{N'}=\mbox{FDAC}(
\mathbf{\tilde {A}}_{N'},\mathbf f_{N'})\\[0.1in]
\ds\qquad \hat{\mathbf f}_{N'}=\hat{\mathbf f}_{N'}-\mathbf{\tilde{\Gamma}}_{N'}\mathbf v_{N'}\\[0.1in]
\ds\qquad \hat{\mathbf v}_{N'}=\mbox{FDAC}(\mathbf{\hat{ \tilde{A}}}_{N'},\hat{\mathbf f}_{N'})  \\[0.1in]
\ds \quad {\mbox{end if}}\\[0.1in]
\ds\mbox{end function}
\end{array}$$
\end{algorithm}
By Theorem \ref{thm3} the truncation errors $R^{i,j}_{s,k}$ are not necessarily small when $i-j$ is small, so the approximations of the corresponding entries may lose accuracy. To ensure the accuracy of the approximations, we evaluate entries on the right-up bands of $\mathbf{\Gamma}_{N'}$ with width ${c=\lceil\log N\rceil}$ (i.e., colored entries in the matrix below) exactly as follows
\begin{equation}\label{add1}\begin{array}{l}
\ds\mathbf{\tilde\Gamma}_{N'}=\mbf{\hat{\Gamma}}_{N'}+\mbf{\Psi}_{N'},\\[0.15in]
\mbf{\hat\Gamma}_{N'}\ds=
\left[\begin{array}{ccccccc}
\ds \tilde\Gamma_{1,1}&\cdots&
\tilde\Gamma_{1,N'-c}& 0 &0&
\cdots&0\\[0.1in]
\ds \tilde\Gamma_{2,1}&\cdots&\tilde\Gamma_{2,N'-c}&\tilde\Gamma_{2,N'-c+1}
&0&
\cdots&0\\[0.1in]
\vdots&\vdots&\ddots&\ddots&\ddots&\ddots&\vdots\\[0.1in]
\ds \tilde\Gamma_{c,1}&\cdots&
\tilde\Gamma_{c,N'-c}&\tilde\Gamma_{c,N'-c+1}
&\tilde\Gamma_{c,N'-c+2}
&\cdots&0\\[0.1in]
\vdots&\vdots&\ddots&\ddots&\ddots&\ddots&\vdots\\[0.1in]
\ds \tilde\Gamma_{N',1}&\cdots&\tilde\Gamma_{N',N'-c}&\tilde\Gamma_{N',N'-c+1}
&\tilde\Gamma_{N',N'-c+2}&\cdots&\tilde\Gamma_{N',N'}\\[0.15in]
\end{array}\right],\\[1in]
\mbf{\Psi}_{N'}=\ds
\left[\begin{array}{ccccccc}
\ds 0&\cdots&
0&{\color{blue}\Gamma_{1,N'-c+1}}&
{\color{blue}\Gamma_{1,N'-c+2}}&
\cdots&{\color{blue}\Gamma_{1,N'}}\\[0.1in]
\ds 0&\cdots&0&0
&{\color{blue}\Gamma_{2,N'-c+2}}&
\cdots&{\color{blue}\Gamma_{2,N'}}\\[0.1in]
\vdots&\vdots&\ddots&\ddots&\ddots&\ddots&\vdots\\[0.1in]
\ds 0&\cdots&
0&0
&0
&\cdots&{\color{blue}\Gamma_{c,N'}}\\[0.1in]
\vdots&\vdots&\ddots&\ddots&\ddots&\ddots&\vdots\\[0.1in]
\ds 0&0&0
&0&\cdots&0\\[0.1in]
\end{array}\right],
\end{array}
\end{equation}

 Thus, the matrix $\mbf{\Psi}_{N'}$ has $\frac{c(c+1)}{2}$ ($c=\lfloor \log N\rfloor$) nonzero entries totally, which indicates the matrix-vector multiplication $\mbf{\Psi}_{N'}\mbf v_{N'}$ can be computed exactly in $\frac{1}{2}\log^2N$ of memory and $\frac{1}{2}\log^2N$ of computational work.

\begin{theorem}\label{FAST:DAC}
The sub-matrix $\tilde{\mathbf{\Gamma}}_{N'}$ in (\ref{DAC:e4}) can be expressed as a sum of Toeplitz matrices multipied by diagonal matrices, so the corresponding system can be stored in $O(kN\log^2 N)$ and can be solved in $O(kN\log^3 N)$ operations by the fast approximated DAC method (see Algorithm 2).
\end{theorem}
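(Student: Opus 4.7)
The plan is to read the promised decomposition of $\tilde{\mbf \Gamma}_{N'}$ directly off Theorem \ref{thm2} and then aggregate the cost over the DAC tree. First, I would absorb the prefactor $d(x_i)h^{2-\alpha(x_i)}/\Gamma(4-\alpha(x_i))$ from (\ref{s1:e8}) into the two bracketed sums of (\ref{thm2:e1}) and expand the product. Each resulting summand, indexed by $0\le m\le s$ and $1\le p\le k$, factors as $\lambda_{m,p}(x_i)\,\tau_{m,p}(i-j)$, where $\lambda_{m,p}$ carries all the $\alpha(x_i)$-dependence and $\tau_{m,p}(r)=2\,r^{3-\bar{\alpha}-2p}\ln^m r$ depends only on the index difference $r=i-j$. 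This gives
\[
\hat{\mbf \Gamma}_{N'}=\sum_{m=0}^{s}\sum_{p=1}^{k}\mbf D_{m,p}\mbf T_{m,p},
\]
with $\mbf D_{m,p}$ diagonal and $\mbf T_{m,p}$ Toeplitz, which is exactly the form claimed. Because a contiguous block of rows and columns of a Toeplitz matrix is again Toeplitz, the decomposition is inherited by every sub-block generated by splitting (\ref{DAC:e4}) without any recomputation.

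Second, I would dispose of the exact near-diagonal correction $\mbf\Psi_{N'}$ from (\ref{add1}): by construction it has only $c(c+1)/2=O(\log^2 N)$ nonzero entries, so storing it and applying it cost $O(\log^2 N)$ at each level and $O(\log^3 N)$ in total, which is absorbed by the leading order. Third, I would count storage. Using the choice $s=\lfloor (e/2)\ln N\rfloor=O(\log N)$ from Theorem \ref{thm3}, the number of diagonal-Toeplitz summands is $(s+1)k=O(k\log N)$, and each summand is described by $O(N)$ data (the $N$ diagonal entries plus the first row and column of the Toeplitz factor). Therefore $\hat{\mbf \Gamma}_{N'}$ at a single level of the DAC tree needs $O(kN\log N)$ memory, and summing over the $O(\log N)$ levels of the tree gives the claimed $O(kN\log^2 N)$ bound.

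Finally, I would bound the running time. One matrix-vector product with $\hat{\mbf\Gamma}_{N'}$ is carried out by applying each $\mbf T_{m,p}$ in $O(N\log N)$ via the standard circulant embedding and FFT, followed by a diagonal scaling, for a total of $O(kN\log^2 N)$ per multiplication. Substituting this into the DAC recurrence
\[
T(N)=2T(N/2)+O(kN\log^2 N),
\]
induced by Algorithm \ref{alg:B}, and applying the master theorem yields $T(N)=O(kN\log^3 N)$. The $O(c^3)=O(\log^3 N)$ cost of direct Gaussian elimination at each of the $N/c$ leaves contributes only $O(N\log^2 N)$ altogether, which is again absorbed. The step I expect to require the most care is verifying cleanly that the Toeplitz-plus-diagonal structure and the width-$c$ exact band are both preserved under the recursive partition in (\ref{DAC:e4}), so that the same per-level accounting applies uniformly to every node of the DAC tree; once that is in place, the storage and complexity bounds are purely a matter of combining the master-theorem recursion with the FFT cost of each Toeplitz factor.
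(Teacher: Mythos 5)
Your proposal follows essentially the same route as the paper's proof: the identical diagonal-times-Toeplitz decomposition of $\hat{\mbf\Gamma}_{N'}$ (your symbol $\tau_{m,p}(r)=2r^{3-\bar{\alpha}-2p}\ln^m r$ is exactly the paper's $\ln^{p-1}(r)/r^{\bar{\alpha}+2q-3}$ up to the constant absorbed into the diagonal factor), the same FFT-based matrix--vector product, and the same two-way DAC recurrence with $s=O(\log N)$, so the argument is correct. The one slip is in the accounting for $\mbf{\Psi}$: the DAC tree has $O(N/\log N)$ nodes rather than $O(\log N)$, so the exact-band corrections total $O(N\log^2 N)$ work and $O(N\log N)$ storage rather than $O(\log^3 N)$ --- still dominated by the leading terms, so the stated bounds are unaffected.
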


\begin{proof} We combine (\ref{add1}) with (\ref{thm2:e1}) to express $\mathbf {\hat\Gamma}_{N'}$ as 
$$\begin{array}{l}
\ds\mathbf{\hat\Gamma}_{N'}= \mbox{diag}(\mathbf{K}^{0,1})\mathbf T^{0,1}+
\cdots+\mbox{diag}(\mathbf{K}^{s,1})\mathbf T^{s,1}+\cdots+\mbox{diag}(\mathbf{K}^{s,k})\mathbf T^{s,k}
\end{array}$$
where $\mathbf K^{p,q}:=(K^{p,q}_{i})_{i=1}^{N'}$ and $\mathbf T^{p,q}$ for $0\le p\le s, 1\le q\le k$ are given by
$$\begin{array}{l}
\ds  K^{p,q}_{j-N'}=\frac{2d(x_j)h^{2-\alpha(x_j)}}{\Gamma(4-\alpha(x_j))}
\frac{(\bar{\alpha}-\alpha(x_j))^{p-1}}{(p-1)!}
{3-\alpha(x_j)\choose 2q},~ N'+1\le j\le N,
\end{array}$$
and
$$\begin{array}{l}
\ds \mathbf T^{p,q}=\mbox{Toeplitz}(\mathbf t_c^{p,q},\mathbf t_r^{p,q})
\end{array}$$
 with $\mathbf t_{c}^{p,q}:=( t_{ci}^{p,q})_{i=1}^{N'}$ and $\mathbf t_r^{p,q}:=( t_{ri}^{p,q})_{i=1}^{N'}$ the first column and the first row
of $\mathbf T^{p,q}$, respectively

$$\begin{array}{ll}
\ds  t_{ci}^{p,q}=\frac{\ln^{p-1} (N'+i-1)}{(N'+i-1)^{\bar{\alpha}+2q-3}},~ ~1\le i\le N',\\[0.15in]
\ds  t_{ri}^{p,q}=\left\{\begin{array}{ll}
\ds \frac{\ln^{p-1}(N'-i+1)}
{(N'-i+1)^{\bar{\alpha}+2q-3}},& 1\le i\le N'-c,\\[0.15in]
0,& N'-c+1\le i\le N'.
\end{array}
\right.
\end{array}$$

It is well known that $\mathbf T^{p,q} \mathbf v_{N'}$ can be performed in $O(N'\log N')$ operations via the discrete fast Fourier transform (FFT), which implies that $\mathbf{\hat\Gamma}_{N'}\mathbf v_{N'}$ can be evaluated in $O(ksN'\log N')$ operations. To store $\mathbf{\hat \Gamma}_{N'}$, we only need to store $\mathbf{t}_c^{p,q}$, 
$\mathbf{t}_r^{p,q}$, and $\mathbf{K}^{p,q}$ that require $O(ksN')$ memory. If we take $s=\lceil{\frac{e}{2}\ln N-1}\rceil=O(\log N)$ according to Theorem \ref{thm3},
 the computational cost and the memory requirement of $\mbf{\hat{\Gamma}}_{N'}\mbf v_{N'}$ become $O(kN'\log N\log N')$ and $O(kN'\log N)$, respectively. Then the total number of computations ${\tilde\Theta}_N$ and the storage $\tilde{\Phi}_N$ of the proposed fast method can be evaluated by
$$\begin{array}{ll}
\ds \tilde{\Theta}_N&\ds=O(kN'\log N\log N')+2\tilde{\Theta}_{N'}
\\[0.15in]
&\ds = O \big (kN'\log N\log N' \big ) + 2O \Big (k\frac{N'}{2}\log N\log \frac{N'}{2}\Big)+4\tilde{\Theta}_{\frac{N'}{2}}\\[0.15in]
&\ds=\cdots= O\Big(kN'\log^2N \Big ( 1+2\frac{1}{2} + \cdots
+2^{J-1}\frac{1}{2^{J-1}} \Big) \Big)\\[0.15in]
& \ds = O(kN\log^3N),\\[0.15in]
\tilde{\Phi}_N&\ds=O(kN'\log N)+2\tilde{\Phi}_{N'}= O(kN'\log N)+2O \Big ( k\frac{N'}{2}\log N \Big ) +4\tilde{\Phi}_{\frac{N'}{2}}\\[0.15in]
&\ds=\cdots= O\Big(kN'\log N \Big (1+2\frac{1}{2}+\cdots+2^{J-1}\frac{1}{2^{J-1}} \Big) \Big)\\[0.15in]
&\ds = O\big (kJN'\log N \big ) = O\big (kN\log^2N \big).
\end{array}$$

By (\ref{add1}), the storage and the computational cost of $\mbf{\Psi}_{N'}\mbf v_{N'}$ are both $\frac{1}{2}\log^2 N$. Thus the total computational cost $\epsilon_{\theta}$ and the storage $\epsilon_{\phi}$ are
$$\begin{array}{ll}
\ds \epsilon_{\theta}&\ds=\frac{1}{2}\log^2 N+2\frac{1}{2}\log^2 N+\cdots+2^{J-1}\frac{1}{2}\log^2N\\[0.15in]
&\ds=\frac{1}{2}\log^2N\Big(1+2+\cdots+2^{J-1}\Big)=\frac{2^J-1}{2}\log^2N\le \frac{N}{2}\log^2N,\\[0.15in]
\ds\epsilon_{\phi}&\ds=\frac{1}{2}\log^2N+\frac{1}{2}\log^2N+
\cdots+\frac{1}{2}\log^2N\le \frac{1}{2} \frac{N}{\lfloor\log N\rfloor}\log^2 N\le N\log N.
\end{array}$$

Then totally flops $\Theta_N$ and storage $\Phi_N$ of the fast approximated algorithm can be estimated by
$$\begin{array}{ll}
\ds \Theta_N&=\ds\tilde{\Theta}_N+\epsilon_{\theta}
=O(N\log^3N)+\frac{N}{2}\log^2N =O(kN\log^3N),\\[0.15in]
\ds\Phi_N&\ds=\tilde{\Phi}_N+\epsilon_{\phi}=O(kN\log^2 N),
\end{array}$$
which finishes the proof.
\end{proof}

\begin{remark}
It takes $O(N^2)$ computational works to generate the entries of the original coefficient matrix $\mbf  A$, while in the fast approximated DAC method only $O(ks\log N)$ operations are needed to generate all the entries of the approximated coefficient matrix $\tilde{\mbf A}$. 
\end{remark}
%

\section{Numerical experiments}\label{numer}

We perform numerical experiments to investigate the performance of the fast DAC method (FDAC) by comparing it with the Forward Substitution method (FS). The approximated system (\ref{app}) will be solved and the convergence rates of the indirect collocation method, the CPU times (in seconds) of generating the coefficient matrix ($CPU_M$) and of solving the lower triangular linear system ($CPU_S$) will be recorded. We set the parameter $k$ appeared in Theorem \ref{thm2} equal to $2$ throughout the experiments. 

\paragraph{Experiment 1.}\label{numer1}
We set $d(x)=1$ and
$$\begin{array}{l}
\ds \alpha(x)=(\alpha_0-\alpha_1)\big(1-x-\frac{\sin(2\pi(1-x))}{2\pi}\big)+\alpha_1
\end{array}$$
with $\alpha_0=1.2$, $\alpha_1=1.6$. The exact solution $u(x)=x^4(1-x)$ and the corresponding right hand term $f(x)$ is given by
$$\begin{array}{l}
\ds f(x)=-(12x^2-20x^3)-\Big(\frac{24}{\Gamma(5-\alpha(x))}x^{4-\alpha(x)}
-\frac{120}{\Gamma(6-\alpha(x))}x^{5-\alpha(x)}\Big).
\end{array}$$
The results are presented in Table 1 and Table 2, from which we notice that
\begin{itemize}
\item The
CPU time consumed by FS increases at about a quadruple rate between two consecutive numbers of collocation grids while the increment of the CPU time of the FDAC is almost liner; 
\item  It is more efficient in the FDAC to generate the entries of the coefficient matrix than that in the FS. For instance, when $N=2^{15}$, the FS takes 352 seconds to compute the entries of $\mbf A$ while the FDAC only requires $3.78$ seconds;
\item when $N\geq 2^{11}$, the FDAC is more efficient than FS for solving the linear systems. For instance, for the case of $N=2^{15}$, the $CPU_S$ of FS is $99$ seconds while in the FDAC it is $1.5$ seconds; 
\item  The FS is out of memory for $N\geq 2^{16}$ while the FDAC still works even for $N\geq 2^{19}$; 
\item  The FDAC has almost the same accuracy and convergence rates as FS for relatively small $N$ while for large $N$ the convergence rate is affected by the round-off errors.

\end{itemize}

From the observations mentioned above, the presented FDAC has shown strong potentials for efficiently and effectively solving the variable-order space-fractional diffusion equations by reducing the memory requirement and improving the efficiency of generating entries of the coefficient matrix and solving the linear systems. This implies that the proposed fast method is particularly suitable for the large-scale simulations. 
\begin{table}
\renewcommand{\tablename}
\caption{\centering{ Table 1} \protect \\ \qquad \qquad \qquad \qquad \qquad Errors of 
FS and FDAC for Experiment 1}
\\
 $$ \begin{array}{|c|c|c|c|c|c|}
 \hline
 &FS&&FDAC&\\
 \hline
N&\|u-u_h\|&Order&\|u-u_h\|&Order\\\hline
   2^8&  $4.36009e-06$& &  $4.25781e-06$ &\\
   2^9  &$1.08719e-06$ & 2.00&$1.05751e-06$ & 2.01\\
   2^{10}&$2.71315e-07$ & 2.00&$2.65646e-07$ &  1.99 \\
   2^{11} &$6.77461e-08$& 2.00 &$6.62834e-08$ &2.00\\
   2^{12}& $1.69224e-08$&  2.00 & $1.70719e-08$&  1.96\\
   2^{13} & $4.22815e-09$& 2.00 &$ 4.38408e-09$&1.96 \\\hline
%

  \end{array}$$
\end{table}

\begin{table}

\renewcommand{\tablename}
\caption{\centering{ Table 2} \protect \\ \qquad \qquad \qquad CPUs of 
FS and FDAC for Experiment 1}
\\
  \centering
 $$ \begin{array}{|c|c|c|c|c|c|}
 \hline
 &FS&FS&FDAC&FDAC\\
 \hline
N&CPU_{M}&CPU_{S}&CPU_{M}&CPU_{S}\\\hline
   2^8&0.027& 0.002& 0.037&  0.007\\
   2^9 & 0.084& 0.005&0.040& 0.012\\
  2^{10}&0.315 &	0.027 &0.087 & 0.031\\
 2^{11} &1.27 & 0.179&  0.185&  0.058 \\
   2^{12}& 5.03& 0.801&0.385 &  0.190\\
   2^{13} & 19.76& 3.56& 0.889& 0.297 \\
   2^{14}&85.22 &21.12 & 1.87&   0.642\\
   2^{15} &352.54&98.9 & 3.78 & 1.51  \\

   2^{16} &$-$&$-$& 7.97& 2.20\\
  2^{17}&$-$&$-$&17.26 & 5.16\\
   2^{18} &$-$&$-$&36.14 & 12.03\\
   2^{19} &$-$&$-$&75.53& 27.88
  
  \\\hline
  \end{array}$$
\end{table}

%
%
%

\paragraph{Experiment 2.}\label{numer2}
Let $d(x)=1$ and $\alpha(x)=(\alpha_1-\alpha_0)x+\alpha_0$. We choose a solution with a boundary layer at $x=0$
$$\begin{array}{l}
\ds u(x)=\int_0^x s^{2-\alpha(s)}(x-s)ds-x\int_0^1s^{2-\alpha(s)}(1-s)ds.
\end{array}$$
The corresponding right-hand side is
$$\begin{array}{l}
\ds f(x)=-x^{2-\alpha(x)}-\frac{d(x)}{\Gamma(2-\alpha(x))}\int_0^xs^{2-\alpha(s)}(x-s)^{1-\alpha(x)}ds.
\end{array}$$

Numerical results are presented in Table 3--5, which is strongly consistent with the observations in Experiment 1.

\begin{table}
\renewcommand{\tablename}
\caption{\centering{ Table 3} \protect \\ \qquad \qquad \qquad Errors of FS and FDAC for Experiment 2  with $\alpha_0=1.2$ and $\alpha_1=1.6$}
\\
  \centering
 $$ \begin{array}{|c|c|c|c|c|c|}
 \hline
 &FS&&FDAC&\\
 \hline
N&\|u-u_h\|&Oder&\|u-u_h\|&Order\\\hline
   2^8& $9.00045e-08 $& & $	9.14455e-08$&\\
  2^9	&$3.47080e-08$&1.37	&$3.61159e-08$&1.34\\
2^{10}&$1.24339e-08$&1.48	&$1.29618e-08$&1.48\\
2^{11}&$4.44658e-09$&1.48&$4.82754e-09$&1.42 \\
2^{12}&$1.75634e-09$&1.34&$1.93550e-09$&1.32\\
2^{13}&$9.95153e-10$&0.82&$1.00011e-09$&0.95\\\hline
%

  \end{array}$$
\end{table}

\begin{table}

\renewcommand{\tablename}
\caption{\centering{ Table 4} \protect \\ \qquad \qquad  CPUs of FS and FDAC for Experiment 2 with $\alpha_0=1.2$ and $\alpha_1=1.6$}
\\
  \centering
 $$ \begin{array}{|c|c|c|c|c|c|}
 \hline
 &FS&FS&FDAC&FDAC\\
 \hline
N&CPU_{M}&CPU_{S}&CPU_{S}&CPU_{M}\\\hline
 2^8& 0.028&0.002 &	0.092&	0.010  \\
 2^9	&0.093&	0.005	&	0.041&	0.014\\
2^{10}	&0.324	&0.024	&	0.087&	0.029\\
2^{11}	&1.28	&0.164&	 0.183&	0.064\\
2^{12}	&5.59&	0.810&0.400&	0.139	\\
2^{13}	&22.52&	3.58	& 0.855&	0.320\\
2^{14}&91.46&	18.8&		1.85	&0.693	\\
2^{15}&	367.38	&98.94	&  	4.12	&1.55	\\
2^{16}&-&-&	8.48	&2.38	\\
2^{17}&-&-&	17.88&	5.52	\\
2^{18}&-&-&	36.94	&12.70	\\\hline


  \end{array}$$
\end{table}

\begin{table}

\renewcommand{\tablename}
\caption{\centering{ Table 5} \protect \\ \qquad \qquad CPUs of FS and FDAC for Experiment 2
with $\alpha_0=1.0$ and $\alpha_1=1.5$}
\\
  \centering
 $$ \begin{array}{|c|c|c|c|c|c|}
  \hline
 &FS&FS&FDAC&FDAC\\
 \hline
N&CPU_{M}&CPU_{S}&CPU_{M}&CPU_{S}\\\hline
 2^8 &0.023&	0.002	&0.037&	0.007\\
 2^9	&0.093	&0.005&0.042&	0.011	\\
 2^{10} &	0.411&	0.025&0.088	&0.029	\\
 2^{11} &	1.33	&0.218&0.184&	0.081\\
2^{12} &	5.06	&0.811&	0.398&	0.226\\
 2^{13} &	20.14	&3.47& 0.853&	0.705\\
 2^{14} &	93.18	&17.83&	1.84&	2.36\\
 2^{15} &	366.62	&99.45&	4.11	&9.19
 \\\hline


  \end{array}$$
\end{table}

\section*{Acknowledgments}
This work was funded by the OSD/ARO MURI Grant W911NF-15-1-0562,
  by the National Science Foundation under grants DMS-1620194 and by Natural Science Foundation of Shandong Province under grants ZR2019BA026.

\end{document}